\documentclass{article}
\usepackage{graphicx,amsmath,amssymb,amsfonts,amsthm,
comment,color,bm} 

\usepackage[algoruled]{algorithm2e}
\usepackage{tikz}
\usepackage{pgfplots}
\usepackage{pgfplotstable}

\newtheorem{theorem}{Theorem}

\newtheorem{corollary}{Corollary}

\newtheorem{remark}{Remark}

\newcommand{\offdiag}{{\rm offdiag}}
\newcommand{\ones}{{\bf 1}}
\newcommand{\bv}{{\bm v}}
\newcommand{\bw}{{\bm w}}
\newcommand{\bx}{{\bm x}}

\newcommand{\bd}{{\bm d}}

\newcommand{\be}{{\bm e}}
\newcommand{\bz}{{\bm z}}

\newcommand{\bb}{{\bm b}}
\newcommand{\bsu}{{\bm u}}
\newcommand{\wh}{\widehat}

\definecolor{mypink1}{rgb}{0.858, 0.188, 0.478}
\definecolor{mypink2}{RGB}{219, 48, 122}
\definecolor{mypink3}{cmyk}{0, 0.7808, 0.4429, 0.1412}
\definecolor{mygray}{gray}{0.7}
\definecolor{mygray1}{gray}{0.6}
\definecolor{myred}{rgb}{1,0,1}

\DeclareMathOperator{\opvec}{vec}

\title{Component-wise accurate fixed point iterations for computing the square root of a singular M-matrix}

\author{Dario A. Bini\thanks{Dipartimento di Matematica Universit\`a di Pisa} \and Bruno Iannazzo\thanks{Università degli Studi di Perugia}\and Beatrice Meini\thanks{Dipartimento di Matematica Universit\`a di Pisa} \and Jie Meng\thanks{Ocean University of China, Tsingtao}}

\begin{document}
	\maketitle

\begin{abstract}
We analyze two fixed-point iterations for computing the principal square root of an M-matrix $A$. Although these iterations, with customary initialization, converge sublinearly when $A$ is a singular M-matrix, we show that, under suitable mild conditions on the initial approximation,  the convergence is linear. Moreover, we provide component-wise accurate versions of these iterations, which allow us to approximate the principal square root with a component-wise relative error uniformly bounded by a small multiple of the machine precision. Numerical experiments demonstrating the effectiveness of the proposed algorithms for certain classes of problems are presented.
\end{abstract}

{\bf Keywords:}
M-matrix, Matrix square root, Component-wise accurate algorithms, GTH algorithm, Fixed-point iterations, Matrix iterations,\maketitle Binomial iteration

\section{Introduction}\label{sec:intro}
We are interested in the component-wise accurate computation of the principal square root $A^{1/2}$ of a singular M-matrix $A$. The matrix $A^{1/2}$ exists under the mild assumption that 0 is a semisimple eigenvalue of $A$; in this case, $A^{1/2}$ is a singular M-matrix as well.
The square root and inverse square root of discretized differential operators  that are M-matrices arise when computing the Dirichlet-to-Neumann and Neumann-to-Dirichlet maps \cite{druskin}.
Computing the principal matrix square root of a  singular M-matrix also finds applications in the analysis of fractional diffusion problems in complex network analysis \cite{fabio} as well as in scientific computing and engineering problems \cite{frommer}. 

An M-matrix is a matrix of the form $A=\alpha I -B$, where $B\ge 0$, $\alpha>0$ and $\rho(B)\le \alpha$. The M-matrix $A$ satisfies {\em property C} if either $\det A\ne 0$ or 0 is a semi-simple eigenvalue of $A$.  Nonsingular M-matrices and irreducible singular M-matrices satisfy property C.
Interesting examples in the latter class are Laplacian matrices of a connected graph $\mathcal G$, i.e., matrices $A=D-W$, where $W$ is the (irreducible) adjacency matrix of the graph and  $D=\operatorname{diag}(d_1,\ldots,d_n)$, $\bd=(d_i):=W\ones$, $\ones=(1,1,\ldots, 1)^\top$.

If $A$ is an M-matrix, then it can be written in the form $A=s(I-N)$, where $s>0$, $N\ge 0$, and $\rho(N)\le 1$; moreover, as pointed out in \cite{as82}, we may assume w.l.o.g.~that $\hbox{diag}(N)>0$, i.e., $N$ has positive diagonal entries. 
In \cite[Theorem 4]{as82}, it is shown that, if $A$ has property C, then  $A^{1/2}=s^{1/2}(I-G)$, where $G\ge 0$ solves the matrix equation $X^2-2X+N=0$ and $\rho(G)\le 1$, and is the limit of the sequence $X_{k+1}=\frac12(X_k^2+N)$, $X_0=0$.
This sequence, called {\em binomial iteration} in \cite[Section 6.8.1]{higham:book}, is formed by (component-wise) nonnegative matrices. The operations involved in computing $X_{k+1}$ from $X_k$ are multiplications and additions of nonnegative matrices; therefore, they are component-wise numerically stable.
The convergence of the binomial iteration is linear and turns sublinear for a generic initial approximation when $A$ is singular. Therefore, the binomial iteration  becomes of scarce practical use when $A$ is singular.

A different approach is proposed in \cite{uno}, where the authors have introduced iterative algorithms  based on the triplet representation of M-matrices \cite{alfa}, \cite{mehdi}  and on a suitable adaptation of Cyclic Reduction (CR), which is a quadratically convergent iteration widely used in the numerical solution of matrix equations arising in Markov chain models \cite[Chapter 7]{blm:book}.
If $A$ is singular, then the convergence of CR deteriorates from quadratic to linear with convergence rate $\frac{1}{2}$ as shown in \cite{guo}. As a remedy to restore the quadratic convergence, in \cite{uno} the shift technique of \cite[Section 3.6]{blm:book} is applied, preserving, even though partially, the component-wise accuracy of the algorithm. Unfortunately, this strategy can be applied only under strong assumptions on the matrix $A$.

Here, we analyze the natural iteration (that is, the aforementioned binomial iteration for $X_0$ not necessarily 0) and the U-based iteration \cite[Section 6]{blm:book} for computing the matrix $G$ such that  $A^{1/2}=s^{1/2}(I-G)$ when $A=s(I-N)$ is a singular M-matrix. Compared to CR, these two fixed-point iterations offer a significantly lower computational cost per step. However, their sublinear convergence for almost any choice of $X_0$ is a substantial limitation, rendering them impractical for most applications.

We show that with a suitable choice of $X_0$, the sequence $\{X_k\}$ generated by the Binomial and U-based iterations converges linearly, even when $A$ is singular, and the convergence rate can be explicitly expressed in terms of the eigenvalues of $A$.
Furthermore, we provide versions of these algorithms designed to avoid numerical cancellation at every step. This property leads to component-wise accurate algorithms for computing $A^{1/2}$ with respect to the relative error. Since the cost of each step is much smaller than the cost of a step of CR, these iterations may be a valid alternative to CR for computing $A^{1/2}$ in a stable and fast way in the case where $A$ is singular.

The effectiveness of the proposed fixed-point iterations is illustrated through numerical experiments on sparse or bipartite graph Laplacians, a setting where the accelerated CR version from \cite{uno} cannot be applied.

The paper is organized as follows. In Section~\ref{sec:prel}, we recall some preliminary results, in particular the GTH-like algorithm for the component-wise accurate solution of a linear system with an M-matrix, and the definition of triplet representation. In Section~\ref{sec:reduction}, we show that if $A=I-N$ is an M-matrix satisfying property C, then $A^{1/2}=I-G$ where $G$ is the {\em minimal nonnegative} solution of the equation $X^2-2X+N=0$. Section~\ref{sec:natural} is devoted to the analysis of the natural iteration in the case where $A$ is a singular M-matrix. Conditions on $X_0$ are given so that the convergence of the natural iteration is linear. The rate of convergence is explicitly given, in terms of the eigenvalues of $A$, in the case where $X_0\bsu=\bsu$ for $A\bsu=0$, and in the case where, in addition, $\bv^\top X_0=\bv^\top$ for $\bv^\top A=0$; a simple choice satisfying the above properties is $X_0=I$.
Section~\ref{sec:fixpU} contains a similar analysis for the U-based iteration. Section 6 deals with error propagation, along with comments on the overall numerical stability of fixed-point iterations. Section~\ref{11} gives necessary and sufficient conditions for the existence of a vector $\bsu>0$ such that $A\bsu=0$, where $A$ is a singular M-matrix,  and provides a general method for computing the matrix square root that works for singular M-matrices with a triplet.
Finally, Section~\ref{sec:exp} reports the results of some numerical tests, while Section~\ref{sec:conclusions} concludes the paper.

\section{Preliminaries}\label{sec:prel}
We know that for a matrix $A$ satisfying property C, there exists a unique square root, denoted by $A^{1/2}$, that is
{\em principal}, i.e., its eigenvalues have nonnegative real parts; moreover $A^{1/2}$ is {\em primary}, i.e., it is a polynomial of $A$, and is an M-matrix \cite{as82,uno}. This way, $A^{1/2}$ can be written as $A^{1/2}=s^{1/2}(I-G)$, $G\ge 0$.

Throughout the paper, we assume that $A$ is an M-matrix satisfying property C and, without loss of generality, we assume that $s=1$ so that we may write $A=I-N$, $N\ge 0$, $\hbox{diag}(N)>0$, and $A^{1/2}=I-G$ with $G\ge 0$, $\rho(G)\le 1$. 

\subsection{Component-wise accurate solution of a system with an M-matrix}

For a square matrix $M$, we write $\offdiag(M)$ for the matrix with zero diagonal entries and off-diagonal entries equal to those of $M$. We recall \cite{mehdi,li12} that the \emph{triplet representation} of an M-matrix $A$ is a triple $(-\offdiag(A), \bsu,\bv)$, where $\bsu$ and $\bv$ are nonnegative and positive vectors, respectively, such that $A \bsu = \bv$. From a triplet representation, it is possible to recover the diagonal entries of $A$ by avoiding numerical cancellation, namely $a_{ii}=(v_i -\sum_{j\ne i} a_{ij}u_j)/u_i$, $i=1,\ldots,n$.

Let $A$ be an invertible M-matrix, and assume we are given a triplet representation 
 $(-\offdiag(A), \bsu,\bv)$.
 Then the solution of the system $A\bx=\bb$, for $\bb\ge 0$,  can be computed in a component-wise accurate way by means of Algorithm~\ref{alg:gth}, based on the extension of the GTH algorithm \cite{alfa} and here given in the version described in \cite{poloni15}. 
\begin{algorithm}
   \textbf{Input}:
       $B\in\mathbb R^{n\times m}$, $B\ge 0$, and a triplet representation 
 $(-\offdiag(A), \bsu,\bv)$ of $A$ \;
   \textbf{Output}: $X = A^{-1} B$\;
   Set $L=I$,  $U=\offdiag(A)$\;
   \For{
      $\ell=1,2,\dots,n-1$}{
      $U_{\ell,\ell} = (v_\ell - U_{\ell,\ell+1:n} u_{\ell+1:n} )/u_\ell$\;
      $L_{\ell+1:n,\ell} = U_{\ell+1:n,\ell} /U_{\ell,\ell}$\;
      $v_{\ell+1:n} = v_{\ell+1:n} - L_{\ell+1:n,\ell} v_\ell$\;
      $U_{\ell+1:n,\ell+1:n} = \offdiag(U_{\ell+1:n,\ell+1:n} - L_{\ell+1:n,\ell} U_{\ell,\ell+1:n} )$\;
      $U_{\ell+1:n,\ell} = 0$\;
      }
   $U_{n,n}=v_n/u_n$\;
   Compute $Y = L^{-1} B$ by forward substitution\;
   Compute $X = U^{-1}Y$ by back substitution.
   \caption{GTH-like algorithm for solving the system $AX = B$, where $A\in\mathbb R^{n\times n}$ is a nonsingular M-matrix,  $X,B\in\mathbb R^{n\times m}$, and $B\ge 0$} \label{alg:gth}
\end{algorithm}

\section{Reduction to a quadratic matrix equation}\label{sec:reduction}
A formal manipulation of the equation $(I-X)^2=A$, where $A=I-N$, shows that the matrix $G=I-A^{1/2}$ solves the quadratic equation
\begin{equation}\label{eq:qme}
    X^2 - 2 X + N = 0.
\end{equation}

We show that $G$ is the entry-wise minimal nonnegative solution of \eqref{eq:qme}. 

\begin{theorem} 
\label{th:min}
    Let $A=I-N$, $N\ge 0$, $\mathrm{diag}(N)>0$ and $\rho(N)\le 1$, be an M-matrix with property C.
    Then, $A^{1/2}$  is an M-matrix of the form $I-G$, $G\ge 0$, $\rho(G)\le 1$, where
    $G=\lim_{k\to\infty}G_k$ with
    \[
    G_{k+1}=\frac{1}{2}(G_k^2+N),\quad G_0=0.
    \]
    Moreover, $G$ is the minimal nonnegative solution of \eqref{eq:qme}. Finally, if $A$ is singular then $\rho(G)=1$, and if $\bsu>0$ is such that $(I-N)\bsu=0$, then $G\bsu=\bsu$. If, in addition, $1$ is a simple eigenvalue of $N$, then $1$ is the only eigenvalue of $G$ of modulus $1$.
\end{theorem}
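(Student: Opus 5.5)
The first part (that $A^{1/2}=I-G$ with $G\ge 0$, $\rho(G)\le 1$, and $G=\lim G_k$ for the binomial iteration with $G_0=0$) is essentially \cite[Theorem 4]{as82}, recalled in the introduction. I will invoke it directly after the normalization $s=1$. It remains to prove minimality, $\rho(G)=1$ in the singular case, $G\bsu=\bsu$, and the statement about peripheral eigenvalues.

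\emph{Minimality.} Let $Y\ge 0$ be any solution of \eqref{eq:qme}, so that $Y=\frac12(Y^2+N)$. I argue by induction that $G_k\le Y$ for all $k$. The base case is $G_0=0\le Y$. For the inductive step, if $0\le G_k\le Y$ then $G_k^2\le Y^2$, because products of nonnegative matrices are monotone; hence $G_{k+1}=\frac12(G_k^2+N)\le \frac12(Y^2+N)=Y$. Passing to the limit gives $G\le Y$. By continuity, $G$ itself solves \eqref{eq:qme}.

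\emph{Spectral facts.} Since $G$ is a polynomial in $A$ (primary square root), eigenvalues correspond via $\mu=1-\sqrt{1-\lambda}$ with $\lambda\in\sigma(N)$, principal branch. If $A$ is singular, then $1\in\sigma(N)$, so $1\in\sigma(G)$, and together with $\rho(G)\le1$ this gives $\rho(G)=1$. For $\bsu>0$ with $A\bsu=0$: because $A^{1/2}=p(A)$ for a polynomial $p$, and $p(0)=0$ ($0$ being an eigenvalue of $A$ with $p(0)^2=0$), we get $A^{1/2}\bsu=p(0)\bsu=0$. Hence $G\bsu=\bsu$.

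\emph{Peripheral spectrum.} Suppose $|\mu|=1$ with $\mu=1-\sqrt{1-\lambda}$ and $|\lambda|\le 1$. Setting $z=\sqrt{1-\lambda}$, we have $\mathrm{Re}\, z\ge 0$ and $|1-z|=1$, while $1-z^2=\lambda$ with $|\lambda|\le1$. I will show that this forces $z=0$. Write $z=1-e^{i\theta}$, so $\mathrm{Re}\,z=1-\cos\theta\ge0$ automatically. Then $1-z^2=1-(1-e^{i\theta})^2=2e^{i\theta}-e^{2i\theta}=e^{i\theta}(2-e^{i\theta})$, whose modulus is $|2-e^{i\theta}|\ge1$, with equality only when $\theta=0$. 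So $|\lambda|\le1$ forces $\theta=0$, i.e.\ $z=0$ and $\lambda=1$.

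Thus every unimodular eigenvalue of $G$ equals $1$ and arises from $\lambda=1$. If $1$ is simple for $N$, then the eigenvalue $1$ of $G$ is simple as well, since eigenvalue multiplicities transfer under a primary function whose map is injective near $\lambda=1$. Hence $1$ is the only eigenvalue of $G$ of modulus $1$.

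The main delicate point is the spectral mapping for the singular case: making sure the primary function is well defined at $\lambda=1$ (non-differentiable point of $\sqrt{\cdot}$ at $0$). This is exactly where property C (semisimplicity of $0$) is needed, and it is handled by citing \cite{as82,uno}.
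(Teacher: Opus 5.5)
Your proposal is correct and follows essentially the same route as the paper: cite \cite{as82} for $G=\lim G_k$, prove minimality by the same monotone induction, and read off $G\bsu=\bsu$, $\rho(G)=1$ and the peripheral spectrum from $G=I-(I-N)^{1/2}$ via the spectral mapping theorem. The differences are only of detail. The paper makes explicit the identification of the \cite{as82} root $I-G$ with the principal root, via \cite[Theorem 7]{uno} and \cite[Theorem 6.1]{u} and using that $0$ is semisimple for $I-G$, whereas you fold this into the citation; and your computation with $z=1-e^{i\theta}$, combined with $\rho(G)\le 1$, actually proves that $\gamma\ne 1$, $|\gamma|\le 1$ forces $|1-(1-\gamma)^{1/2}|<1$, which the paper only asserts.
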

\begin{proof}
   Under the assumptions on $A$, according to \cite[Theorem 4]{as82}, $A$ has an M-matrix square root of the form $I-G$ with $G\ge 0$, $\rho(G)\le 1$ and $G=\lim_k G_k$. Moreover, $I-G$ is nonsingular or the eigenvalue $0$ is semisimple. Suppose that $0\le Y$ is a solution; we note that $0\le Q\le Y$ implies $0\le \frac{1}{2}(Q^2+N)\le \frac{1}{2}(Y^2+N)=Y$, and this inequality can be used as an inductive step to prove that $G\le Y$. In fact, observe that $0=G_0\le Y$, moreover, applying the inequality $\frac{1}{2}(Q^2+N)\le Y$ with $Q=G_k$, we find that $G_k\le Y$ implies $G_{k+1}\le Y$. By induction, this implication proves that $G_k\le Y$ for any $k\ge 0$ so that $G=\lim_k G_k\le Y$.
   This proves the minimality of $G$.
   
   According to \cite[Theorem 7]{uno}, $A^{1/2}$ is the unique M-matrix square root of $A$ that is a primary function of $A$ (namely a polynomial of $A$). On the other hand, $I-G$ is an M-matrix square root nonsingular or singular with semisimple $0$ eigenvalue and by \cite[Theorem 6.1]{u} it is a primary function of $A$. This implies $A^{1/2}=I-G$.
   
 We can write $G=I-A^{1/2}=I-(I-N)^{1/2}$ and if $A\bsu =0$, then $G\bsu =\bsu$, $1$ is an eigenvalue of $G$, and since $\rho(G)\le 1$, we get $\rho(G)=1$.
   Now, assume that $1$ is a simple eigenvalue of $N$. Denote by $\gamma_i$, $i=1,\ldots,n$, the eigenvalues of $N$, with the convention that $\gamma_1=1$. 
   The spectral mapping theorem \cite[Theorem 1.13(d)]{higham:book} guarantees that the eigenvalues of $G$ are $\lambda_i=1-(1-\gamma_i)^{1/2}$ and since
   $\gamma_i\ne 1$ and $|\gamma_i|\le 1$, for $i=2,\ldots,n$, then $|\lambda_i|<1$ for $i=2,\ldots,n$. Therefore, $1$ is the only eigenvalue of $G$ of modulus $1$.
\end{proof}

It is interesting to observe that part of the results given in Theorem \ref{th:min} is valid under more general assumptions, as expressed by the following.

\begin{remark}
   It is possible to prove that, given the more general splitting $A=M-N$, where $M$ is an M-matrix with property C, and $N\ge 0$, the square root of $A$ can be written as $A^{1/2}=M^{1/2}-W$, where $W$ is the minimal nonnegative solution of the algebraic Riccati equation $ X^2-M^{1/2}X-XM^{1/2}+N=0$.
\end{remark}

In the next sections, we analyze two fixed-point iterations for computing the matrix square root of an M-matrix satisfying property C in a numerically stable way with respect to the component-wise relative error.

\section{The Natural iteration}\label{sec:natural}
A first fixed point iteration, that resembles the {\em natural iteration} in the framework of Markov chains \cite[Chapter 6]{blm:book}, is given by
\begin{equation}\label{eq:nat}
   X_{k+1}=\frac{1}{2}(X_k^2+N),~k=0,1,\ldots,\quad X_0\ge0,~~X_0\bsu\le \bsu,
\end{equation}
where $\bsu>0$ is such that $(I-N)\bsu= 0$. If $X_0=0$, it coincides with the Binomial iteration \cite[Section 6.8.1]{higham:book}, implicitly introduced in Theorem \ref{th:min}.
The matrix sequence obtained with  $X_0=0$ 
converges linearly when $A$ is nonsingular, while the convergence is sublinear when $A$ is singular. 
In fact, the Jacobian of the fixed-point function $F(X)=\frac12(X^2+N)$ in the fixed point $G$ is given by $\frac12(I\otimes G+G^\top\otimes I)$; the spectral radius of the latter matrix is $\rho(G)$. Therefore, if $\rho(G)<1$, the convergence is linear with a converging rate $\rho(G)$, while if $\rho(G)=1$, then the convergence turns to sublinear.

However, we may show that if $X_0$ is such that  $X_0\bsu=\bsu$, say, $X_0=I$, the convergence turns to linear even though $A$ is singular.
We have the following convergence result

\begin{theorem}\label{th:convsingular}
   Let $A=I-N$, $N\ge 0$, be a singular M-matrix such that $N\bsu=\bsu$, for a given
   vector $\bsu>0$; moreover, assume that $1$ is a simple eigenvalue of $N$. Then,
   the sequence $\{X_k\}_k$ generated by \eqref{eq:nat} is convergent to the minimal nonnegative solution $G$ to the matrix equation \eqref{eq:qme}. Moreover, if $X_0\bsu = \bsu$,  
   then we have
   \[
       \limsup_{k\to\infty}\|X_k-G\|^{1/k}\le \sigma, \quad \sigma =\frac{1}{2}\max_{2\le i\le n}\max_{1\le j\le n}|\lambda_i+\lambda_j|<1,
   \]
   where $\lambda_i$, $i=1,\ldots,n$, are the eigenvalues of $G$ such that $1=\lambda_1>|\lambda_2|\ge \cdots \ge |\lambda_n|$ .
   Moreover, if in addition $\bv^\top X_0=\bv^\top$, where $\bv>0$ is such that $\bv^\top N=\bv^\top$,  then $\sigma=|\lambda_2|$.   
    \end{theorem}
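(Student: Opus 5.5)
We split the argument into a global convergence part and a local rate part.

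\emph{Convergence.} Since $X_0\ge 0$ and $X_0\bsu\le\bsu$, induction shows that $X_k\ge 0$ and $X_k\bsu\le \frac12(\bsu+\bsu)=\bsu$ for every $k$. Hence $\{X_k\}$ is bounded, because $\bsu>0$ gives $\|X_k\|_\infty$ bounded in the $\bsu$-weighted norm. We then sandwich $X_k$ between two monotone sequences. The lower sequence is $L_{k+1}=\frac12(L_k^2+N)$ with $L_0=0$. It increases to $G$ by Theorem~\ref{th:min}, and $L_k\le X_k$ by monotonicity of $F(X)=\frac12(X^2+N)$ on nonnegative matrices.

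For the upper sequence, take $U_0\ge X_0$ with $U_0\bsu=\bsu$ and $F(U_0)\le U_0$. A natural candidate is a matrix $U_0=Y\ge0$ with $Y\bsu=\bsu$ that dominates both $X_0$ and $F$-images, e.g.\ $U_0=D_u^{-1}$-scaled rank one $\bsu\bw^\top$ with $\bw^\top\bsu=1$ and large entries. This choice needs care, so an alternative is the following. Any limit point $Y$ of $X_k$ is nonnegative, satisfies $Y\bsu\le\bsu$, and the upper sequence $U_k$ decreases to a solution $\hat G\ge G$ with $\hat G\bsu\le\bsu$. Since $\rho(G)=1$ with Perron vector $\bsu$ and $\hat G\ge G$, we get $\hat G\bsu\ge G\bsu=\bsu$, so $(\hat G-G)\bsu=0$ with $\hat G-G\ge0$ and $\bsu>0$. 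This forces $\hat G=G$. This Perron-vector squeeze is the cleanest way to identify the limit, and constructing a valid supersolution $U_0$ is the main technical obstacle. A fallback is to work in the $\bsu$-weighted $\infty$-norm, in which $\|X_k\|\le 1$, and to use a compactness argument together with the uniqueness just shown.

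\emph{Rate when $X_0\bsu=\bsu$.} Let $E_k=X_k-G$. Then
\[
E_{k+1}=\tfrac12(GE_k+E_kG+E_k^2),
\]
and $E_k\bsu=0$ for all $k$, by induction from $X_k\bsu=\bsu=G\bsu$. So $\opvec(E_k)$ lies in the subspace $\mathcal S=\{\opvec(E):E\bsu=0\}=\{\bsu\}^\perp\otimes\mathbb R^n$ (in the $\bsu^\top\otimes I$ sense). This subspace is invariant under the linear part $J=\frac12(I\otimes G+G^\top\otimes I)$, since $(GE+EG)\bsu=0$ whenever $E\bsu=0$. The eigenvalues of $J$ are $\frac12(\lambda_i+\lambda_j)$. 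The restriction of $J$ to $\mathcal S$ removes exactly the eigenvalues associated with the right eigenvector $\bsu$ of $G$ in the column factor; working with a Schur or Jordan basis of $G$ adapted to $\bsu$ shows that the remaining spectrum is $\{\frac12(\lambda_i+\lambda_j): i\ge2,\ 1\le j\le n\}$. Its radius is $\sigma$. Moreover $\sigma<1$, because $|\lambda_i|<1$ for $i\ge2$ by Theorem~\ref{th:min}.

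Since $E_k\to0$ and $E_k\in\mathcal S$, the standard Ostrowski-type argument applies. Choose a norm on $\mathcal S$ with $\|J|_{\mathcal S}\|\le\sigma+\epsilon$. The quadratic term $\frac12E_k^2$ also lies in $\mathcal S$ and is $O(\|E_k\|^2)$. Hence $\|E_{k+1}\|\le(\sigma+2\epsilon)\|E_k\|$ eventually, which gives $\limsup\|E_k\|^{1/k}\le\sigma$.

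\emph{Two-sided case.} If also $\bv^\top X_0=\bv^\top$, then by induction $\bv^\top X_k=\bv^\top$. Here we use $\bv^\top G=\bv^\top$, which holds since $G$ is a polynomial in $N$. Thus $\bv^\top E_k=0$ as well, so $E_k$ lies in the smaller $J$-invariant subspace $\{E:E\bsu=0,\ \bv^\top E=0\}$. In an adapted basis, writing $G=\bsu\bv^\top/(\bv^\top\bsu)+\tilde G$, the restricted operator has eigenvalues $\frac12(\lambda_i+\lambda_j)$ with $i,j\ge2$. Its modulus is at most $|\lambda_2|$, with equality at $i=j=2$. Repeating the previous argument gives $\sigma=|\lambda_2|$.
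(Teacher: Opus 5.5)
Your proposal is correct and follows essentially the paper's proof. The convergence part rests on boundedness from $X_k\bsu\le\bsu$, the comparison $X_k\ge G_k$ with the binomial sequence, and the squeeze $(H-G)\bsu\le 0$, $H-G\ge 0$ at any accumulation point $H$. For the rate, you restrict the linearized error recursion to the invariant subspace $\{E: E\bsu=0\}$, adding $\bv^\top E=0$ in the two-sided case. Your one-step contraction in an adapted norm replaces the paper's bound on the products $\widetilde M_k\cdots\widetilde M_0$ in a Jordan basis, which is equivalent.

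The upper-sequence detour should be dropped: any $U_0\ge 0$ with $U_0\bsu=\bsu$ and $F(U_0)\le U_0$ satisfies $(U_0-F(U_0))\bsu=0$, so it is a fixed point and equals $G$, hence can dominate $X_0$ only when $X_0\le G$. Apply the squeeze directly to accumulation points instead, for which $H\ge G$ follows from $L_k\le X_k$.
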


\begin{proof}
    Under these assumptions, $A$ satisfies property C. We prove by induction that $X_k\ge 0$ and that $X_0\bsu=\bsu$ implies $X_k\bsu= \bsu$ for any $k\ge 0$, while $X_0\bsu\le \bsu$ implies $X_k\bsu\le\bsu$ for any $k\ge 0$. This is true for $k=0$; to prove the inductive step, assume that $X_k\ge 0$ and $X_k\bsu=\bsu$. We have $X_{k+1}\ge 0$ since it is the sum of nonnegative matrices; moreover, $X_{k+1}\bsu=\frac{1}{2}(X_k^2\bsu+N\bsu)= \bsu$ since $X_k^2\bsu=X_k(X_k\bsu)= X_k\bsu= \bsu$, and $N\bsu= \bsu$. Similarly, we do if $X_0\bsu\le\bsu$ to prove that $X_k\bsu\le\bsu$. From the conditions $X_k\ge 0$ and $X_k\bsu\le\bsu$, with $\bsu>0$, we deduce that the sequence $\{X_k\}_k$ is uniformly bounded from above so that it belongs to a compact set; therefore, it admits converging subsequences. Let us prove that the sequence $\{X_k\}_k$ has only one accumulation point, which coincides with the minimal nonnegative solution $G$ of \eqref{eq:qme}.
    Consider the sequence $\{G_k\}_k$ defined in Theorem \ref{th:min} satisfying $\lim _k G_k=G$;  since $X_0\ge G_0=0$, it follows by induction that $X_k\ge G_k$ for any $k\ge 0$.
    Let $\{X_{\sigma_k}\}_k$ be a subsequence of $\{X_k\}_k$ converging to some limit $H$. Then, from $0\le G_{\sigma_k}\le X_{\sigma_k}$ we get $0\le G\le H$. Since $X_{\sigma_k}\bsu\le \bsu$ then $0\le G\bsu \le H\bsu\le \bsu$. On the other hand, from Theorem~\ref{th:min}, we have $G\bsu=\bsu$, so that we get $H\bsu =G\bsu$. Since $H-G\ge 0$ and $\bsu>0$ we conclude that $H=G$.
    
    Concerning the convergence speed, observe that from the identity $G=\frac{1}{2}(G^2+ N)$ it follows that $G-X_{k+1}=\frac{1}{2}(G(G-X_k)+(G-X_k)X_k)$. That is, denoting
    $E_k:=G-X_k$, we have
    \[
       E_{k+1}=\frac{1}{2}(GE_k+E_kG-E_k^2).
    \]
    Setting $\be_k:=\hbox{vec}(E_k)$, where $\hbox{vec}(X)$ is the $n^2$-vector obtained by stacking the columns of $X$, we get
    \[
       \be_{k+1}=M_k\be_k,\quad M_k=\frac{1}{2}(I_n\otimes (G-E_k)+G^\top\otimes I_n).
    \]
    That is,
    \[
       \be_{k+1}=M_kM_{k-1}\cdots M_0\be_0.
    \]

    Now consider the Jordan form of $G$, that is $J=S^{-1} GS$. 
    Since $\lambda_1=1$ is a simple eigenvalue of $G$ by Theorem~\ref{th:min}, it appears in a Jordan block of size 1, so that we may assume $J$ in the form
    \begin{equation}\label{eq:jordan}
       J=\begin{bmatrix}
         1&0\\0&\widetilde J
       \end{bmatrix}
    \end{equation}
    and $\widetilde J$ has eigenvalues $\lambda_2,\ldots,\lambda_n$, moreover, $|\lambda_i|<1$ for $i>1$.

    Define $\widehat M_k:=(S^\top \otimes S^{-1}) M_k(S^\top\otimes S^{-1})^{-1}=\frac{1}{2}(I_n\otimes (J-S^{-1} E_k S)+J^\top\otimes I_n )$. 
    Since $\bsu=G\bsu=X_k\bsu$, we have $E_k\bsu=0$. Moreover, since the first column of $S $ is proportional to $\bsu$, then the first column of $\widehat E_k=S^{-1} E_k S$ is zero, so that the first $n$ components of     $\wh{\be}_k:=\hbox{vec}(\widehat E_k)$ are zero.
    Denoting $\widetilde{\be}_k$ the $(n^2-n)$ vector formed by the last $n^2-n$ component of $\widehat{\be}_k$, we get the recurrence
    \begin{equation}\label{eq:norme}
       \widetilde {\be}_{k+1}=\mathcal{M}_k\widetilde{\be}_0,\quad \mathcal {M}_k=\widetilde M_k \widetilde M_{k-1}\cdots \widetilde M_0,
    \end{equation}
    where $\widetilde M_k$ is the submatrix of $\widehat M_k$ formed by the last $n^2-n$ rows and columns, that is, $\widetilde M_k=\frac12(I_{n-1}\otimes (J- \widehat E_k)+\widetilde J^\top\otimes I_n)$.
    Rewrite the latter expression as 
    $\widetilde M_k=\widetilde M+\Delta_k$, 
    with $\widetilde M=\frac12(I_{n-1}\otimes J+\widetilde J^\top\otimes I_n)$, 
    $\Delta_k=-\frac12 ( I_{n-1}\otimes \widehat E_k)$.
    We show that, for any $\epsilon>0$, there exists an operator norm $\|\cdot\|_\epsilon$  and there exist $k_0$ such that
    \begin{equation}\label{eq:normeps}
       \| {\mathcal M}_k \|_\epsilon \le (\rho(\widetilde M) +\epsilon))^{k-k_0} \gamma_\epsilon,\quad k\ge k_0,
    \end{equation}
    where $\gamma_\epsilon$ is a positive constant independent of $k$ and $\rho(\cdot)$ denotes the spectral radius.
    To prove this bound, we apply the property that for any $\epsilon$ and for any matrix $W$, there exists an operator norm $\|\cdot\|_\epsilon$ such that $\|W\|_\epsilon\le\rho(W)+\epsilon/2$. Applying this property to $W=\widetilde M$, from \eqref{eq:norme} we get
    \[
       \|\mathcal M_k\|_\epsilon\le
       (\rho(\widetilde M)+\epsilon/2+\|\Delta_k\|_\epsilon)
       \cdots (\rho(\widetilde M)+\epsilon/2+\|\Delta_0\|_\epsilon).
    \]
    Now, since $\|\Delta_k\|_\epsilon$ tends to 0 for $k\to\infty$, there exists $k_0$ such that for any $k\ge k_0$ we have $\|\Delta_k\|_\epsilon\le \epsilon/2$. This fact implies that
    \[
       \|\mathcal M_k\|_\epsilon \le (\rho(\widetilde M)+\epsilon)^{k- k_0}\gamma_\epsilon , \quad \gamma_\epsilon =\prod_{i=0}^{k_0}\|\widetilde M_i\|_\epsilon. 
    \]
    For any other matrix norm $\|\cdot\|$, there exists a constant $\delta_\epsilon$ such that $\|W\|\le\delta_\epsilon \|W\|_\epsilon$ for any matrix $W$. Therefore, we obtain
    \[
        \|\mathcal M_k\|\le\gamma_\epsilon\delta_\epsilon (\rho(\widetilde M)+\epsilon)^{k-k_0}.
    \]
    so that
    \[
        \|\widetilde{\be}_{k+1}\|\le \gamma_\epsilon\delta_\epsilon\|\be_0\|(\rho(\widetilde M)+\epsilon)^{k-k_0}.
    \]
    Taking the $k+1$st root on both sides of the above expression, we get
    \[
        \limsup_{k\to\infty}\|\widetilde \be_{k+1}\|^{1/(k+1)}\le\rho(\widetilde M)+\epsilon,
        \hbox{ for any }\epsilon>0,
    \]
    that implies     $\limsup_{k\to\infty}\|\be_{k}\|^{1/k}\le\rho(\widetilde M)$.
    The thesis follows since the spectral radius  of  $ \widetilde M=\frac12(I_{n-1}\otimes J+\widetilde J\otimes I_n)$ is 
    $\sigma=\frac{1}{2} \max_{2\le i\le n}\max_{1\le j\le n}|\lambda_i+\lambda_j|$. Since $\lambda_1=1$ and for $i>1$, we have $\lambda_i\ne 1$, $|\lambda_i|\le 1$, then $\sigma<1$.

    Finally, if $X_0\bsu=\bsu$ and $\bv^\top X_0=\bv^\top$ then   the first row and the first column of $\widehat E_0$ have null entries. 
    Therefore $\widehat E_k$ has the first row and the first column equal to 0 for any $k\ge 0$. This implies that the first block formed by $n$ entries of $\widehat{\be}_k$ is zero, and the remaining blocks have their first component equal to zero.
    This allows us to repeat the same analysis as in the previous case by replacing the matrix $\widetilde M$ with $\widetilde M=\frac12(I_{n-1}\otimes\widetilde J+\widetilde J^\top\otimes I_{n-1})$. 
    The eigenvalues of this matrix are $\frac{1}{2}(\lambda_i+\lambda_j)$, for $i,j=2,\ldots, n$, so that $\sigma=|\lambda_2|$. This concludes the proof. 
\end{proof}

\begin{remark}\rm
Observe that in Theorem \ref{th:convsingular} we may replace the assumption that $1$ is a simple eigenvalue of $N$ with the condition that $1$ is semisimple. But in this case, the condition $X_0\bsu=\bsu$ must be satisfied for all the eigenvectors $\bsu$ of $N$ corresponding to the eigenvalue 1. Similarly, for the condition $\bv^\top X_0=\bv^\top$ of $N$ corresponding to the eigenvalue 1.
Observe also that the choice $X_0=I$ satisfies both conditions $X_0\bsu=\bsu$ and $\bv^\top X_0=\bv^\top$ for any right and left eigenvectors $\bsu$ and $\bv$ of $N$ corresponding to the eigenvalue $\lambda=1$.
\end{remark}

As pointed out in \cite{higham:book} for the case $X_0=0$, the floating-point implementation of \eqref{eq:nat} to compute an approximation to the matrix $G$ incurs no cancellation and is component-wise numerically stable. 
The potential cancellation in computing the diagonal entries of $Y = I-G$ can be easily overcome by using the triplet representation of $Y$. In fact, since $A^{1/2}\bsu=0$ and $G\bsu=\bsu$, for the diagonal entries $y_{11},\ldots, y_{nn}$ of $Y=A^{1/2}$ we have 
$y_{ii}u_i=-\sum_{j=1,\, j\ne i}^n y_{ij}u_j=\sum_{j=1,\, j\ne i}^n g_{ij}u_j$, where $G=(g_{ij})$. 
So that we obtain the expression
$y_{ii}=\frac{1}{u_i}\sum_{j=1,\, j\ne i}^n g_{ij}u_j$ whose implementation in floating point arithmetic does not involve cancellation.

The resulting procedure is reported as Algorithm \ref{alg:bin}.

\begin{algorithm} 
    \textbf{Input:} A matrix $N\ge 0$ with $\rho(N)=1$ and simple eigenvalue equal to $1$; a vector $\bsu>0$ such that $N\bsu=\bsu$; a matrix $X_0\ge 0$ such that $X_0\bsu=\bsu$; a tolerance $\epsilon>0$\;
    \textbf{Output:} An approximation $Y$ to $(I-N)^{1/2}$ together with the triplet representation $(-\offdiag(Y),\bsu,0)$ of $Y$ \;      
    \For {$k=0,1,2,\ldots$}
          {
          $X_{k+1}=\frac{1}{2}(X_k^2+N)$\;
          If {$|X_{k+1}-X_k|\le\epsilon|X_{k+1}|$}{\quad exit\;}
          }
    $-\offdiag(Y)=\offdiag(X_{k+1})$, $\hbox{diag}(Y)=-\hbox{diag}(\bsu)^{-1}\offdiag (Y)\bsu$.
    \caption{Natural (Binomial) fixed point iteration}\label{alg:bin}
\end{algorithm}

\section{The U-based iteration}\label{sec:fixpU}
A different iteration for computing the minimal nonnegative solution to \eqref{eq:qme} is the one derived from the  {\em U-based iteration} described in \cite[Chapter 6]{blm:book} and is given, for $k=0,1,\ldots$, by
\begin{equation}\label{eq:ub}
X_{k+1}=(2I-X_k)^{-1}N,\quad X_0\ge 0,\quad X_0\bsu \le \bsu.
\end{equation}
Its convergence is faster than the convergence of \eqref{eq:nat}, and its rate of convergence can be explicitly expressed  as shown in the following

\begin{theorem}\label{th:convu}
   Under the same assumptions of Theorem \ref{th:convsingular},
   the sequence $\{X_k\}_k$ generated by \eqref{eq:ub}
   is such that $X_k\ge 0$, $X_k\bsu\le \bsu$, and   $2I-X_k$ is a nonsingular M-matrix.
   Moreover, $\lim_{k\to\infty} X_k = G$, with
   $G$ the minimal nonnegative solution to the matrix equation \eqref{eq:qme}. If $X_0\bsu = \bsu$, then we have  $X_k\bsu= \bsu$ for any $k\ge 0$ and
   \[
       \limsup_{k\to\infty}\|X_k-G\|^{1/k}\le \sigma,
       \quad \sigma= |\lambda_2|,
   \]
   where $1=\lambda_1>|\lambda_2|\ge \cdots \ge |\lambda_n|$ are the eigenvalues of $G$. 
   Moreover, if in addition, $\bv^\top X_0=\bv^\top$, where $\bv>0$ is such that $\bv^\top N=\bv^\top$, then       
   \[
      \sigma= \max_{2\le j\le n}\left|\frac{\lambda_2}{2-\lambda_j}\right|.
   \]
\end{theorem}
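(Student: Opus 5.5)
We mirror the proof of Theorem~\ref{th:convsingular}: first invariance and monotonicity properties, then a compactness argument for convergence, and finally a linearization of the error recursion in the Jordan basis of $G$.

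\textbf{Step 1: invariance.} We argue by induction. Suppose $X_k\ge 0$ and $X_k\bsu\le\bsu$. Then $(2I-X_k)\bsu\ge\bsu>0$. Since $2I-X_k$ is a Z-matrix with a positive vector $\bsu$ mapped to a positive vector, it is a nonsingular M-matrix, so $(2I-X_k)^{-1}\ge 0$. Hence $X_{k+1}=(2I-X_k)^{-1}N\ge 0$. Moreover $(2I-X_k)X_{k+1}\bsu=N\bsu=\bsu$, which gives
\[
2X_{k+1}\bsu=\bsu+X_kX_{k+1}\bsu .
\]
To conclude $X_{k+1}\bsu\le\bsu$, write $\bw=\bsu-X_{k+1}\bsu$. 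A short computation gives $(2I-X_k)\bw=(I-X_k)\bsu\ge 0$, hence $\bw\ge 0$. If $X_k\bsu=\bsu$, the same identity gives $\bw=0$, so $X_{k+1}\bsu=\bsu$.

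\textbf{Step 2: convergence.} Let $G_k$ be generated by \eqref{eq:ub} from $G_0=0$. For $0\le Q\le Y$ with $Y\bsu\le\bsu$ we have $(2I-Q)^{-1}\le(2I-Y)^{-1}$, because $(2I-Q)^{-1}=\sum_j 2^{-j-1}Q^j$ converges (as $\rho(Q)\le1$) and is monotone in $Q$. It follows that $G_k$ is nondecreasing and bounded by any nonnegative solution $Y$ with $Y\bsu\le\bsu$, in particular by $G$. Hence $G_k$ converges to a solution, which must be $G$. By the same monotonicity and induction, $X_k\ge G_k$. The sequence $\{X_k\}_k$ is bounded since $X_k\bsu\le\bsu$. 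Any limit point $H$ solves \eqref{eq:qme} and satisfies $H\ge G$ and $H\bsu\le\bsu=G\bsu$, so $H=G$ exactly as in Theorem~\ref{th:convsingular}.

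\textbf{Step 3: rate.} Put $E_k=G-X_k$. Using $G=(2I-G)^{-1}N$, we get
\[
(2I-X_k)E_{k+1}=(2I-X_k)G-N=(G-X_k)G=E_kG .
\]
Hence
\[
E_{k+1}=(2I-X_k)^{-1}E_kG, \qquad \be_{k+1}=\bigl(G^\top\otimes(2I-X_k)^{-1}\bigr)\be_k .
\]
The limiting operator is $M=G^\top\otimes(2I-G)^{-1}$, with eigenvalues $\lambda_i/(2-\lambda_j)$. The perturbation $\Delta_k$ tends to $0$, so the $\epsilon$-norm product argument of Theorem~\ref{th:convsingular} applies verbatim once the relevant invariant subspace is identified.

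We work in the basis $J=S^{-1}GS$ from \eqref{eq:jordan}. If $X_0\bsu=\bsu$, then $E_k\bsu=0$, so the first column of $\widehat E_k=S^{-1}E_kS$ vanishes. The recursion $\widehat E_{k+1}=\widehat{(2I-X_k)^{-1}}\,\widehat E_k J$ preserves this. The right factor $J$ then acts only through $\widetilde J$, which has eigenvalues $\lambda_i$, $i\ge2$. The left factor $(2I-X_k)^{-1}$ tends to $(2I-G)^{-1}$, with eigenvalues $1/(2-\lambda_j)$, $j\ge1$. The resulting rate is $\max_{i\ge2,\,j}|\lambda_i/(2-\lambda_j)|$, and the maximum is attained at $j=1$, since $|2-\lambda_j|\ge 1=2-\lambda_1$ because $|\lambda_j|\le1$. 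This gives $\sigma=|\lambda_2|$.

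If in addition $\bv^\top X_0=\bv^\top$, we need $\bv^\top E_k=0$ for all $k$. Note that $\bv^\top G=\bv^\top$, since $G$ is a polynomial in $N$. Suppose $\bv^\top X_k=\bv^\top$. Then $\bv^\top(2I-X_k)=\bv^\top$, so $\bv^\top(2I-X_k)^{-1}=\bv^\top$, and therefore $\bv^\top E_{k+1}=\bv^\top E_kG=0$. Consequently the first row of $\widehat E_k$ vanishes too. The first row and column of $\widehat{(2I-X_k)^{-1}}$ are then those of the identity up to the $\bsu$/$\bv$ structure, so the action on the remaining $(n-1)\times(n-1)$ block is $\widetilde J^\top\otimes(2I-\widetilde J)^{-1}$ in the limit. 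This gives $\sigma=\max_{j\ge2}|\lambda_2/(2-\lambda_j)|$.

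\textbf{Main obstacle.} The delicate point is this last reduction. We must check that $(2I-X_k)^{-1}$ in the $S$-basis is block-triangular with respect to the first coordinate in the way needed, so that the $(n-1)^2$ block recursion closes with a perturbation $\Delta_k\to0$. We would verify that $X_k\bsu=\bsu$ and $\bv^\top X_k=\bv^\top$ make $S^{-1}X_kS$ have first row and column equal to $\be_1$, and then argue exactly as in Theorem~\ref{th:convsingular}.
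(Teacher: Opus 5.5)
Your proposal is correct and follows the paper's route: invariance of $X_k\ge 0$ and $X_k\bsu\le\bsu$ by induction, convergence by comparison with the iteration started at $X_0=0$, and the error recursion $E_{k+1}=(2I-X_k)^{-1}E_kG$ analyzed in the Jordan basis with the $\epsilon$-norm argument. It also fills in details the paper leaves as ``similarly'', namely the monotonicity of $(2I-Q)^{-1}$ and the invariance of $\bv^\top X_k=\bv^\top$. The only loose remark is your claim that a limit point $H$ of a subsequence solves \eqref{eq:qme}; this is not justified, but it is also not needed, since $H\ge G$ and $H\bsu\le G\bsu$ already force $H=G$.
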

\begin{proof}
   We proceed by induction on $k$ to show that $X_k\ge 0$, $X_k\bsu\le \bsu$, and   $2I-X_k$ is a nonsingular M-matrix.
   For $k=0$ we have $X_0\ge 0$, $X_0\bsu\le \bsu$, and $(2I-X_0)\bsu> \bsu$. Therefore, in view of \cite[Theorem 2.3, I27]{bp:book}, $2I-X_0$ is a nonsingular M-matrix. Now assume that $X_k\ge 0$, $X_k\bsu\le \bsu$,  and $2I-X_k$ is a nonsingular M-matrix.  Then, in view of \cite[Theorem 2.3, N38]{bp:book}, $(2I-X_k)^{-1}\ge 0 $ so that $X_{k+1}\ge 0$. Moreover,
   \[
      X_{k+1}\bsu=(2I-X_{k})^{-1}N\bsu=(2I-X_{k})^{-1}\bsu<\bsu,
   \]
   where the last equality follows from $(2I-X_k)\bsu>\bsu$. Then, it follows  $(2I-X_{k+1})\bsu>\bsu$. 
   Thus, in view of \cite[Theorem 2.3, I27]{bp:book}, $2I-X_{k+1}$ is a nonsingular M-matrix. If $X_0\bsu =\bsu$ we can prove, by following the same lines as in the proof of Theorem~\ref{th:convsingular}, that $X_k\bsu =\bsu$ for any $k\ge 1$. Similarly, we show the convergence to $G$.
    
   Concerning the convergence speed, performing a similar analysis as in the proof of Theorem~\ref{th:convsingular}, we arrive at 
   \[
      E_{k+1}=(2I-G+E_k)^{-1}E_kG, 
   \]
   for $E_k=G-X_k$, which yields
   \[
      \be_{k+1}=(G^\top\otimes  (2I-G+E_k)^{-1})\be_k.
   \]
   Performing a similar analysis as in the proof of Theorem \ref{th:convsingular}, where we denote $J$ the Jordan form of $G$ and $\widetilde J$ the trailing $(n-1)\times(n-1)$ submatrix of $J$ given in equation \eqref{eq:jordan}, we arrive at 
   \[
       \limsup_{k\to\infty}\|X_k-G\|^{1/k}=\rho(\widetilde{J} ^\top\otimes(2I-J))=\max_{2\le i\le n}\max_{1\le j\le n}\left|\frac{\lambda_i}{2-\lambda_j}\right|=|\lambda_2|,
   \]
   Moreover, under the additional assumption $\bv^\top X_0=\bv^\top$ we arrive at
   \[
       \sigma= \max_{2\le i\le n}\max_{2\le j\le n}\left|\frac{\lambda_i}{2-\lambda_j}\right|=\max_{2\le j\le n}\left|\frac{\lambda_2}{2-\lambda_j}\right|.
   \]
\end{proof}

From Theorem \ref{th:convu} we obtain the  entry-wise stable implementation of the U-based iteration \eqref{eq:ub} given in Algorithm \ref{alg:U}.

\begin{algorithm}
    \textbf{Input:} A matrix $N\ge 0$ with $\rho(N)=1$ and simple eigenvalue equal to 1; a vector $\bsu>0$ such that $N\bsu=\bsu$; a matrix $X_0\ge 0$ such that $X_0\bsu=\bsu$; a tolerance $\epsilon>0$\;
    \textbf{Output:} An approximation $Y$ to $(I-N)^{1/2}$, together with the triplet representation $(-\offdiag(Y),\bsu,0)$ of $Y$ \;   
    \For{$k=0,1,2,\ldots$}
        {
        $X_{k+1}=(2I-X_k)^{-1}N$, where Algorithm \ref{alg:gth} is applied to $2I-X_k$, with $\bv=\bsu$ and $B=N$, that is, given the triplet representation $(-\offdiag(X_k),\bsu,\bsu)$ of $2I-X_k$, the triplet representation $(-\offdiag(X_{k+1}),\bsu,\bsu)$ of $2I-X_{k+1}$ is computed\;
        If {$|X_{k+1}-X_k|\le\epsilon|X_{k+1}|$}{\quad exit\;}
        }
    $-\offdiag(Y)=\offdiag(X_{k+1})$, $\hbox{diag}(Y)=-\hbox{diag}(\bsu)^{-1}\offdiag (Y)\bsu$.
    \caption{U-based fixed-point iteration}\label{alg:U}
\end{algorithm}

\section{On the component-wise numerical stability}
We have observed that in a single step of Algorithms \ref{alg:bin} and \ref{alg:U}, numerical cancellation cannot occur; hence, the component-wise relative error is bounded by $\theta u$, where $\theta$ is a constant depending on the matrix size $n$, and $u$ is the machine precision. Indeed, these errors accumulate at each step of the iteration and may grow with the number of iterations. However, it must be said that, unlike CR, functional iterations are self-correcting algorithms; therefore, the errors accumulated at step $k$ are reduced at the subsequent steps. 

If we denote by $e_k$ an upper bound to the modulus of the maximum component-wise relative error at step $k$, and with $\theta u$ an upper bound to the component-wise roundoff error, we get the inequality
\[
   e_k\le \rho  e_{k-1}+\theta u, 
\]
where $\rho<1$ is the error reduction factor at each step.
This yields 
\[
   e_k\le \rho^k e_0+\theta u\sum_{i=0}^{k-2}\rho^i\le \rho^k e_0+\frac {\theta }{1-\rho}u.
\]
That is, the error bound increases as the convergence rate $\rho$ approaches 1.

\section{On the existence of $\bsu>0$ such that $A\bsu =0$ }\label{11}
In the theoretical results of Sections~\ref{sec:natural} and \ref{sec:fixpU}, we make the hypothesis that there exists $\bsu>0$ such that $A\bsu = 0$. This hypothesis is necessary in Theorem~\ref{th:convsingular}; the matrix 
\[
    A = \begin{bmatrix} 1 & 0 \\ 0 & 0\end{bmatrix}=I-N,\quad N=\begin{bmatrix}
        0&0\\0 & 1
    \end{bmatrix},
\]
such that $N\bsu=\bsu$ for $\bsu = \begin{bmatrix} 0 & 1\end{bmatrix}^\top \ge 0$,  and 1 is a simple eigenvalue of $N$, gives an example. In fact, with $X_0 =\left[\begin{smallmatrix} 3 & 0 \\ 2 & 1\end{smallmatrix}\right]$ such that $X_0\bsu = \bsu$, the limit of (2) does not exist. 
To prove this, consider $\bv^\top=\begin{bmatrix}
    1 & 0
\end{bmatrix}$, then we have $\bv^\top X_0=\theta_0\bv^\top$, $\theta_0=3$, and $\bv^\top N=0$. 
This implies that $\bv^\top X_{k+1}=\frac12 \bv^\top X_k^2$, whence $\bv^\top X_{k+1}=\theta_{k+1}\bv^\top$, and $\theta_{k+1}=\frac12\theta_k^2$. 
Since $\theta_0=3$, it follows that $\theta_k=2\left(\frac32\right)^{2^k}$, so that $\lim_{k\to\infty}\theta_k=\infty$.
On the other hand, for the matrix $A$ there exists no positive vector such that $A\bsu = 0$.

The existence of $\bsu>0$ such that $A\bsu=0$ is guaranteed for graph Laplacians, but for a general M-matrix, this property is not generally true. Here, we characterize M-matrices for which there exists $\bsu >0$ such that $A\bsu = 0$, and M-matrices for which there exist $\bsu >0$ and $\bv >0$ such that $A\bsu = 0$ and $\bv^\top A=0$, complementing the results on the existence of $\bsu>0$ such that $A\bsu\ge 0$ in \cite{uno}.

We use the Frobenius normal form of $A$, that is, the block upper triangular matrix
\begin{equation}\label{eq:fnf}
    \Pi^\top  A\Pi = \begin{bmatrix}
    A_{11} & A_{12} & \cdots & A_{\nu\nu}\\
    & A_{22} & \ddots & \vdots\\
    & & \ddots & \vdots\\
    & & & A_{\nu\nu}
    \end{bmatrix},
\end{equation}
where $\Pi$ is a permutation matrix and $A_{ii}$ is a square nonsingular or singular irreducible matrix, for $i=1,\ldots,\nu$ ($1\times 1$ zero blocks are singular irreducible in our notation).

For the sake of simplicity, we assume that $A$ is in its Frobenius normal form, and that the vector $\bsu$ such that $A\bsu=0$ is partitioned according to the block partitioning of $A$ as $\bsu^\top=[\bsu_1^\top,\bsu_2^\top,\ldots,\bsu_\nu^\top]$.

\begin{theorem}\label{1}
   Let $A\in\mathbb R^{n\times n}$ be an M-matrix in Frobenius normal form \eqref{eq:fnf}. There exists $\bsu\in\mathbb R^n$,  $\bsu>0$ such that $A\bsu = 0$ if and only if  there exists at least a singular diagonal block and
   \begin{itemize}
      \item[(i)] if $A_{ii}$ is singular, then $A_{ij}=0$ for $j\ne i$;
      \item[(ii)] if $A_{ii}$ is nonsingular, there exists $j\ne i$ such that $A_{ij}\ne 0$.
   \end{itemize}
\end{theorem}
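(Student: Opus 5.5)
The plan is to work block row by block row, using two standard Perron--Frobenius facts about the irreducible diagonal blocks $A_{ii}$. First, if $A_{ii}$ is a singular irreducible M-matrix, then it has positive right and left null vectors; in particular there is $\bw_i>0$ with $\bw_i^\top A_{ii}=0$. This includes the $1\times 1$ zero block, with $\bw_i=1$. Second, if $A_{ii}$ is a nonsingular irreducible M-matrix, then $A_{ii}^{-1}>0$ entrywise, which is trivially true in the $1\times 1$ case. Since $A$ is an M-matrix, all off-diagonal blocks $A_{ij}$ with $j>i$ are $\le 0$, and $A_{ij}=0$ for $j<i$ by the block upper triangular form \eqref{eq:fnf}. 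So in conditions (i)--(ii) only the blocks with $j>i$ matter. With $\bsu$ partitioned as in the text, the equation $A\bsu=0$ reads
\[
A_{ii}\bsu_i=-\sum_{j>i}A_{ij}\bsu_j,\qquad i=1,\ldots,\nu ,
\]
and the right-hand side is $\ge 0$ whenever $\bsu\ge 0$.

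\emph{Necessity.} Assume $\bsu>0$ and $A\bsu=0$, and fix a block index $i$.

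If $A_{ii}$ is singular, multiply the block equation on the left by $\bw_i^\top$. This gives $0=\bw_i^\top A_{ii}\bsu_i=-\sum_{j>i}\bw_i^\top A_{ij}\bsu_j$. Every term $\bw_i^\top A_{ij}\bsu_j$ is $\le 0$, so each term must be zero. Since $\bw_i>0$, $\bsu_j>0$ and $A_{ij}\le 0$, this forces $A_{ij}=0$, which is (i).

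If $A_{ii}$ is nonsingular and all $A_{ij}$ with $j>i$ vanish, then the block equation becomes $A_{ii}\bsu_i=0$ with $\bsu_i\neq 0$. This contradicts nonsingularity, so (ii) holds.

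Finally, consider the last block $A_{\nu\nu}$. It has no blocks $A_{\nu j}$ with $j\ne\nu$ that can be nonzero. If it were nonsingular, (ii) would fail, so $A_{\nu\nu}$ is singular. This shows that at least one singular diagonal block exists.

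\emph{Sufficiency.} Assume (i) and (ii). I will build $\bsu$ by back substitution, for $i=\nu,\nu-1,\ldots,1$, keeping each $\bsu_i>0$.

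If $A_{ii}$ is singular, take $\bsu_i$ to be a positive Perron null vector of $A_{ii}$. By (i) the right-hand side of the block equation is zero, so the $i$-th block equation holds. This case applies in particular to $i=\nu$: condition (ii) cannot hold for $A_{\nu\nu}$, so it must be singular, and the recursion therefore starts from a singular block.

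If $A_{ii}$ is nonsingular, set $\bsu_i=-A_{ii}^{-1}\sum_{j>i}A_{ij}\bsu_j$. The vector $-\sum_{j>i}A_{ij}\bsu_j$ is nonnegative. By (ii), together with $\bsu_j>0$, it is also nonzero, since a nonzero nonpositive block $A_{ij}$ applied to a positive vector gives a nonzero nonpositive vector. Because $A_{ii}^{-1}>0$, it follows that $\bsu_i>0$. The assembled vector $\bsu$ is then positive and satisfies $A\bsu=0$.

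The only delicate point is the singular-block step of the necessity part, which relies on the existence of a strictly positive left null vector of a singular irreducible M-matrix. I would quote this fact from the Perron--Frobenius theory in \cite{bp:book}. Everything else is a sign argument on the block equations.
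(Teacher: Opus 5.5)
Your proof is correct and follows the paper's argument. For necessity you use a sign analysis of the block equations, and for sufficiency you use back substitution, taking positive null vectors on the singular blocks and $\bsu_i=-A_{ii}^{-1}\sum_{j>i}A_{ij}\bsu_j>0$ on the nonsingular ones. The only difference is in necessity of (i): you pair the block equation with a positive left null vector of $A_{ii}$, whereas the paper cites \cite[Theorem 4]{uno}, so your version is self-contained.
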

\begin{proof}
   If $A\bsu = 0$, then the matrix is singular and there exists a singular diagonal block $A_{ii}$. Since $A_{ii}$ is an irreducible singular M-matrix, there exists $\bw>0$ such that $A_{ii}\bw=0$. Therefore,  $(i)$ holds by \cite[Theorem 4]{uno}. Finally, by partitioning $\bsu$ as $\bsu^\top=[\bsu_1^\top,\bsu_2^\top,\ldots,\bsu_\nu^\top]$, if $A_{ii}$ is nonsingular and $A_{ij}=0$ for $j\ne i$, then  we obtain $A_{ii}\bsu_i = 0$, which contradicts the nonsingularity of $A_{ii}$, thus, $(ii)$ holds true.

   Conversely, assume that there exists a singular diagonal block in the Frobenius normal form of $A$ and assume that the properties $(i)$ and $(ii)$ hold. We provide a vector $\bsu>0$ such that $A\bsu=0$ in the following way. 
   Partition ${\bsu}$  into blocks ${\bsu}_i$ according to the partitioning of $A$. Observe that, if $A_{ii}$ is singular and irreducible, then there exists ${\bsu}_i>0$ such that $A_{ii}{\bsu}_i = 0$. Moreover, since $A_{ij}=0$ for $j\ne i$, then for property $(i)$, the $i$th block component of $A{\bsu}$ is zero independently of the values of $ {\bsu}_j$ for $j\ne i$. Now consider the indices $i_1\le \ldots\le i_\ell$ of the nonsingular diagonal blocks of $A$, the condition $(ii)$ implies $i_\ell<\nu$. Since ${\bsu}_j$ is well defined for $j>i_\ell$, we may define ${\bsu}_{i_\ell} := - A_{i_\ell i_\ell}^{-1}(\sum_{j>i_\ell} A_{i_\ell j}{\bsu}_j)$ so that the $i_\ell$-th block component of $ A{\bsu}$ is zero.  We can easily prove that ${\bsu}_{i_\ell}>0$. In fact, for property $(ii)$, at least one of the matrices in the sum is not zero; therefore, the vector $\bz$ obtained from the sum is nonpositive and different from zero. Since 
    $A_{i_\ell i_\ell}^{-1}>0$ because $A_{i_\ell i_\ell}$ is irreducible and invertible, then $-A_{i_\ell i_\ell}^{-1}\bz>0$.
   We may repeat the same argument for $i_{\ell-1}$, \dots, $i_1$ and define the positive vectors ${\bsu}_{i_{\ell-1}},\ldots ,{\bsu}_{i_{1}}$. This way, $ A{\bsu}$ has all its components zero and ${\bsu}>0$.
\end{proof}

From Theorem \ref{1} we get a constraint for the structure of a singular M-matrix with positive left and right eigenvectors.

\begin{corollary}\label{2}
   Let $A\in\mathbb R^{n\times n}$ be an M-matrix. Then there exist $\bsu,\bv\in\mathbb R^n$, such that $\bsu>0$, $\bv>0$ and $A\bsu = 0$, $\bv^\top  A = 0$ if and only if the Frobenius normal form of $A$ is block diagonal with irreducible singular diagonal blocks.
\end{corollary}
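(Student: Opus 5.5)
Apply Theorem~\ref{1} twice: once to $A$ and once to $A^\top$. Without loss of generality assume $A$ is in Frobenius normal form \eqref{eq:fnf}, since conjugating by $\Pi$ only permutes the entries of $\bsu$ and $\bv$ and preserves positivity. The vector $\bv$ satisfies $\bv^\top A=0$ exactly when $A^\top\bv=0$. The matrix $A^\top$ is block lower triangular with diagonal blocks $A_{ii}^\top$. Reversing the block order gives a block upper triangular form of $A^\top$. Its diagonal blocks $A_{ii}^\top$ are irreducible, and each is singular or nonsingular exactly as $A_{ii}$ is. So this reversed form is a Frobenius normal form of $A^\top$, and the off-diagonal blocks in block row $i$ are $A_{ji}^\top$ for $j<i$.

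For the ``only if'' direction, suppose $\bsu>0$ and $\bv>0$ exist. Theorem~\ref{1} applied to $A$ gives condition (i): a singular $A_{ii}$ has $A_{ij}=0$ for all $j\ne i$, so the whole block row outside the diagonal vanishes. Theorem~\ref{1} applied to $A^\top$ in the reversed form gives that a singular $A_{ii}$ has $A_{ji}=0$ for $j\ne i$, so the whole block column outside the diagonal vanishes.

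It remains to exclude nonsingular diagonal blocks. Suppose some block $A_{ii}$ is nonsingular, and choose the largest such index $i$. Condition (ii) for $A$ gives some $j>i$ with $A_{ij}\ne0$. Every block with index $j>i$ is singular by the choice of $i$, and its column vanishes by the previous step, so $A_{ij}=0$, a contradiction. Hence all diagonal blocks are singular, and all off-diagonal blocks vanish by (i). The Frobenius normal form is therefore block diagonal with irreducible singular diagonal blocks.

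For the ``if'' direction, suppose the form is block diagonal with irreducible singular M-matrix blocks. Each block $A_{ii}$ has a positive right null vector $\bsu_i$ and a positive left null vector $\bv_i$, by Perron--Frobenius applied to an irreducible singular M-matrix. Stacking them gives $\bsu>0$ and $\bv>0$ with $A\bsu=0$ and $\bv^\top A=0$. Alternatively, Theorem~\ref{1} applies directly to $A$ and $A^\top$: condition (i) holds trivially and condition (ii) is vacuous.

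The main obstacle is the ``only if'' step that rules out nonsingular blocks. It needs the observation that Theorem~\ref{1} for $A^\top$ yields the column condition. It then needs the extremal argument with the last nonsingular block to turn condition (ii) into a contradiction.
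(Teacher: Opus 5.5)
Your proposal is correct and follows essentially the route the paper intends: the paper gives no separate proof and presents the corollary as a direct consequence of Theorem~\ref{1}, applied to $A$ and, through the reversed block order, to $A^\top$. Your extremal argument with the last nonsingular diagonal block supplies the step that rules out nonsingular blocks, which the paper leaves implicit.
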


Note that in the hypotheses of Corollary~\ref{2},
by using the property that $(S^{-1}AS)^{1/2} = S^{-1}A^{1/2}S$ and that the square root of a block diagonal matrix is block diagonal with the same block partition, we obtain
\[
    A^{1/2} = \Pi\begin{bmatrix} A_{11}^{1/2}\\ & \ddots \\  & & A_{\nu\nu}^{1/2}\end{bmatrix}\Pi^\top.
\]
Therefore,  the component-wise accurate computation of the square root of $A$ can be reduced to the component-wise accurate computation of the square root of its diagonal blocks.

\subsection{The case of a singular matrix with triplet}
As pointed out in the previous section, the condition $A\bsu =0$ for some $\bsu>0$ is needed to apply the algorithms proposed in Sections~\ref{sec:natural} and \ref{sec:fixpU}. Here we show that we can still devise a component-wise stable algorithm under the milder assumption that  there exist $\bsu>0$ and $\bsu'\ge 0$ such that $A\bsu =\bsu'$, i.e., $A$ has a triplet representation. 
In the sequel of the section, it will be useful to refer to a triplet representation as a \emph{right triplet representation}.
Analogously a left triplet can be defined as the triple $(-\offdiag(A^\top),\bv,\bv')$, with $\bv>0$, $\bv'\ge0$ such that $\bv^\top A=(\bv')^\top $ and it is a right triplet of the transpose matrix $A^\top$. 

For an M-matrix, condition $(i)$ of Theorem \ref{1} is equivalent to the existence of a right triplet \cite{uno}, and it is always fulfilled for M-matrices arising in Markov chains and network models.

The condition for the existence of a left triplet is that in the Frobenius normal form 
{\em \begin{itemize}
\item[(i)'] if $A_{ii}$ is nonsingular, then there exists $j\ne i$ such that $A_{ji}\ne 0$.
\end{itemize}
}

Left and right triplet representation can be used to design component-wise accurate algorithms for M-matrix problems (see \cite{mehdi}), including the matrix square root as shown in \cite{uno}.

If a matrix is irreducible, either singular or nonsingular, then necessarily a right and a left triplet exist \cite{uno}, and from a right (left) triplet a left (right) one can be obtained accurately. Indeed, if $A$ is nonsingular irreducible then $(-\offdiag(A^\top), (A^{-1})^\top\bv',\bv')$ is a left triplet if $\bv'\ge 0$, because $A^{-1}>0$ can be computed accurately; if $A$ is singular irreducible then from the LU factorization $A=LU$ we can easily get $\bv>0$ such that $\bv^\top A=0$.

Now we describe how to use the proposed algorithms to compute the component-wise accurate square root of a singular M-matrix $A$ with a (right) triplet.

We use the Frobenius normal form of $A$ and the property that a primary matrix functions $f$ of a block triangular matrix $T=[T_{ij}]$ is block triangular with the same block structure and $(f(T))_{ii}=f(T_{ii})$ \cite[Theorem 1.13(f)]{higham:book}. 

We assume without loss of generality that $A$ is in the Frobenius normal form \eqref{eq:fnf}.
The square root of the block upper triangular matrix $ A$, denoted as $X = [X_{ij}]_{i,j=1,\ldots,\nu}$, is such that $X_{ij} = 0$ for $i>j$, $X_{ii} = A_{ii}^{1/2}$; moreover,  using the equation $X^2 =  A$,  we get 
\begin{equation}\label{3}
    X_{ii}X_{ij}+X_{ij}X_{jj} = A_{ij} - \sum_{\ell = i+1}^{j-1} X_{i\ell}X_{\ell j},\quad \hbox{for } i<j.
\end{equation}
Applying the vec operator that stacks the columns of an $m\times n$ matrix in a vector of length $mn$ and the formula vec$(AXB)=(B^\top \otimes A)$vec$(X)$, we can write \eqref{3} as a linear system with matrix coefficient $M=I\otimes A_{ii}^{1/2}+(A_{jj}^{1/2})^\top \otimes I$ and nonpositive right-hand side including blocks of $X$ on the left and below $X_{ij}$ in the matrix $X$. Thus, computing the diagonal of $X$ and solving \eqref{3} one column at a time, from the diagonal to the top, yields an algorithm for computing $A^{1/2}$.

The matrix $M$ is a Z-matrix with eigenvalues  $\lambda^{(i)}_h+\lambda^{(j)}_k$ where $\lambda^{(i)}_h$ and $\lambda^{(j)}_k$ are eigenvalues of $A_{ii}^{1/2}$ and $A_{jj}^{1/2}$, respectively. Since $A_{ii}^{1/2}$ and $A_{jj}^{1/2}$ are M-matrices, their nonzero eigenvalues have positive real part, therefore, in view of \cite[Theorem 4.6 E11]{bp:book}, $M$ is an M-matrix and the linear system in \eqref{3} is singular if and only if $A_{ii}$ and $A_{jj}$ are singular. By $(i)$ of Theorem \ref{1}, if $A_{ii}$ is singular, then $A_{ij}=0$ for $j\ne i$ and since $X$ is a polynomial of $A$, we get $X_{ij}=0$ for $j\ne i$. Equation \eqref{3} has a zero right-hand side and the solution $X_{ij} = 0$ (with infinitely many non-interesting solutions). The only equations that should be solved are the ones in which $X_{ii}$ is nonsingular, and in this case, the solution is unique.

We summarize the procedure for the component-wise accurate computation of the blocks $X_{ij}$, for $j\ge i$, of $A^{1/2}$, where $A$ is an M-matrix in Frobenius normal form, with triplet representation 
$(-\offdiag(A),\bsu,\bsu')$, with 
$\bsu^\top = \begin{bmatrix}\bsu_1^\top & \cdots & \bsu_\nu^\top\end{bmatrix}  $ partitioned according to the block structure of $A$:

\begin{itemize}

\item Compute the triplets of the diagonal blocks $A_{ii}$, for $i=1,\ldots,\nu$:
\begin{itemize}
    \item if $A_{ii}$ is singular then the triplet is $(-\offdiag(A_{ii}),\bsu_i,0)$,
    \item if $A_{ii}$ is nonsingular then the triplet is $(-\offdiag(A_{ii}),\bsu_i,\bsu''_i)$, with $\bsu''_i= \bsu'_i-\sum_{j> i} A_{ij}\bsu_j$.
\end{itemize} 
\item Compute the triplets $(-\offdiag(A_{ii}^{1/2}),\bsu_{i,1/2},\bsu'_{i,1/2})$ 
for $i=1,\ldots,\nu$, by using the algorithm of \cite{uno} if $A_{ii}$ is nonsingular, or an algorithm of Section~\ref{sec:natural} or~\ref{sec:fixpU} if $A_{ii}$ is singular.

\item Set $X_{ij} = 0$ if $A_{ii}$ is singular and $j>i$.
\item Compute $X_{ij}$ if $A_{ii}$ is nonsingular and $j>i$ as follows:
\begin{itemize}
    \item  compute a left triplet representation of $A_{jj}^{1/2}$, namely
    \[
    (-\offdiag(A_{jj}^{1/2})^\top ,\bv_{j,1/2},\bv'_{j,1/2});
    \]

    \item solve the linear system obtained by \eqref{3}, i.e.,
\[
   M  \opvec(X_{ij}) = \opvec\bigl(A_{ij}-\sum_{\ell=i+1}^{j-1} X_{i\ell}X_{\ell j}\bigr),
\]
where $M=(I \otimes A_{ii}^{1/2} + (A_{jj}^{1/2})^\top  \otimes I)$; this system is solved 
accurately by means of Algorithm~\ref{alg:gth} by observing that
\[
(-\offdiag(M),\bv_{j,1/2}\otimes \bsu_{i,1/2},\bv_{j,1/2}\otimes \bsu'_{i,1/2}+\bv'_{j,1/2}\otimes \bsu_{i,1/2})
\]
is a triplet representation of $M$.
\end{itemize}

\end{itemize}

This way, the problem of accurate computation of the square root of a singular M-matrix with a triplet can be reduced to the computation of square roots of nonsingular or singular irreducible M-matrices, the left triplet of an irreducible M-matrix, and the solution of linear systems with M-matrix coefficients.

\section{Numerical tests}\label{sec:exp}
In this section, we present results from numerical experiments comparing the proposed fixed-point iterations with CR for graph Laplacians.
We recall that a direct graph $\mathcal G$ having $n$ nodes can be assigned in terms of its $n\times n$ adjacency matrix $W=(w_{ij})$ where $w_{ij}$ is the weight of the direct edge which connects the node $i$ to the node $j$ in $\mathcal G$. Indeed, if $w_{ij}=0$ then there is no edge connecting node $i$ to node $j$. The Laplacian matrix of $\mathcal G$ is defined as $L=D-W$ where $D$ is the diagonal matrix with diagonal entries $(d_1,\ldots,d_n)$, where $d_i=\sum_{j=1}^n w_{ij}$ is the degree of the $i$th node. We have written the Laplacian matrix $L$ in the form $L=\alpha(I-N)$ where
$\alpha=\max_i d_i$. Finally, we have computed $(I-N)^{1/2}=I-G$. 

As a set of tests, we have considered adjacency matrices of graphs for which each column has at least one zero entry. In this case, the acceleration technique valid for CR, analyzed in \cite{uno}, cannot be applied, and standard CR, in the component-wise stable version given in \cite{uno}, has linear convergence with rate $1/2$.
 In our tests, the smallest modulus entry in the matrix $G$ is in the range $[10^{-14},10^{-2}]$; the smaller this value, the more inaccurate is the result computed by standard algorithms for the square root.
The experiments have been performed in Matlab  
Version: 9.11.0 (R2021b) on a laptop with an Intel I3 CPU and Ubuntu  Release 20.04.6 LTS 64-bit.
The adjacency matrix $W$ has been generated in the following way, with the additional condition that $w_{ii}=0$, i.e., the graph $\mathcal G$ has no self-loops.

\begin{enumerate}
    \item Banded matrix $W$ obtained with the commands
    
    {\tt T = rand(n); p = floor(n/6);
        W = tril(triu(T,-p),n-p-2);}
        
        This matrix has a zero entry in each column and in each row; moreover, the minimum modulus entry  of the square root of the corresponding matrix $G$ has the order of $10^{-8}$.
    
    \item The matrix $W$ is defined as in the previous test with  {\tt p = floor(n/10).} In this case, 
    the minimum modulus entry of $G$ has the order of $10^{-13}$.
    
    \item Random bipartite graph where, for $n=2m$, the adjacency matrix $W$ has the form {\tt W = [Z, B1;B2, Z];} with {\tt Z =  zeros(m);} {\tt B1 = rand(m);} {\tt B2 = rand(m);}
    In this case, the minimum modulus entry of $G$ has the order $10^{-3}$.
   
   \item Bipartite graph obtained as before, but where the off-diagonal blocks $\tt B1$ and $\tt B2$ are in generalized upper Hessenberg form and are generated by the command 
   {\tt triu(T,-floor(m/p));} where {\tt T = rand(m);} and {\tt p = 10}.
   In this case, the minimum modulus entry of the square root of the corresponding matrix has the order $10^{-13}$.
   
\end{enumerate}

Randomization has been initialized with the command {\tt rng(0)}. In our tests, we have compared the natural and U-based fixed-point iterations described in Algorithms \ref{alg:bin} and \ref{alg:U}, with CR in the triplet-based version of \cite{uno}, here denoted with CR,  and with standard CR with the shift acceleration \cite{blm:book}, denoted with S-CR. While the triplet-based version of CR is component-wise numerically stable, the standard shifted version of CR is norm-wise stable and does not guarantee a uniform component-wise bound to the error close to the machine precision.  In the experiments,  we compare and report on the number of iterations, the CPU time, and the maximum component-wise relative errors of each algorithm. The component-wise relative error bounds are also compared to the ones generated by the Matlab command {\tt sqrtm}.

In Table \ref{tab:rho}, we display the values of the convergence rate, as given by Theorems \ref{th:convsingular} and \ref{th:convu}, of Algorithms \ref{alg:bin}, \ref{alg:U}, and CR, for the matrices used in our tests 1--4 with $n=100$. 
Concerning CR, we observe that for a singular M-matrix, the convergence rate is $\frac{1}{2}$. This follows from the fact that equation \eqref{eq:qme}, suitably scaled, can be viewed as a null recurrent QBD, and for this class of problems, the convergence rate of CR is $\frac{1}{2}$, as shown in   \cite{guo}.

The table also reports the maximum and the minimum values of the moduli of the entries of $G$. We recall that the smaller the minimum value, the larger the component-wise relative errors are expected in the computed matrix $G$.

\begin{table}[]
	\centering	
	\begin{filecontents*}{rate_100.dat}
Test, Alg. 2, Alg. 3, CR, min, max 
1, 0.62, 0.45, 0.50, 2.8e-08, 1.0 
2, 0.68, 0.52, 0.50, 1.8e-13, 1.0 
3, 0.29, 0.13, 0.50, 3.0e-03, 1.0 
4, 0.69, 0.53, 0.50, 5.1e-13, 1.0 
\end{filecontents*}
\pgfplotstabletypeset[
	col sep = comma,
	header = has colnames,
	columns = {Test, Alg. 2, Alg. 3, CR, min, max},
	every head row/.style={after row=\hline},
	every first column/.style={column type/.add={}{|}},
	columns/Test/.style = {string type, column name = Test},
	columns/Alg. 2/.style   = {column name = Alg. 1, fixed, precision = 2},
	columns/Alg. 3/.style  = {column name = Alg. 2, fixed, precision = 2},
	columns/CR/.style  = {column name = CR, fixed, precision = 2},
	columns/min/.style  = {string type, column name = min},
	columns/max/.style  = {column name = max, fixed, precision = 2}        
	]{rate_100.dat}
\caption{Convergence rates of three algorithms for Tests 1, 2, 3, 4, as provided by Theorems~\ref{th:convsingular} and \ref{th:convu} for $X_0=I$, together with minimum and maximum modulus of the entries of $A^{1/2}$. The numerical values are the medians out of 50 samples generated randomly with $n=100$.}
\label{tab:rho}
\end{table}	

Since for Test 3 the convergence rate of Algorithms 1 and 2 is much smaller than 0.5, for these algorithms, we expect a better performance than CR. For Tests 1, 2, and 4, where the rate $\rho$ is slightly greater than 0.5, the larger number of iterations needed by Algorithms 1 and 2 should be compensated by the smaller computational cost per step.
We will appreciate this fact in the experiments described in the next subsection.

\subsection{Comparing natural and U-based iteration to CR}

Table \ref{tab:pwer} reports the maximum component-wise relative error obtained by the different methods for the different tests, together with the number of iterations needed by the algorithms. 
It is interesting to observe that, as expected, the result given by the Matlab command {\tt sqrtm} produces a large component-wise relative error; this is more evident for Tests 2 and 4 where the minimum absolute value of the entries of $G$ takes the smallest values (compare with Table \ref{tab:rho}).

\begin{table}[]
	\centering		
\begin{filecontents*}{mxerr-iter_100.dat}
Test, Alg. 2, Alg. 3, CR, S-CR, sqrtm
1, 4.30e-15 (86), 4.66e-15 (52), 3.06e-15 (52), 3.49e-11 (8), 5.28e-08
2, 4.29e-15 (109), 3.73e-15 (65), 2.33e-15 (54), 2.42e-06 (8), 4.42e-03
3, 1.66e-15 (31), 1.73e-15 (21), 1.70e-15 (53), 1.43e-15 (7), 4.92e-08
4, 9.82e-15 (113), 3.21e-15 (65), 2.66e-15 (55), 9.81e-07 (9), 7.34e-04
\end{filecontents*}
\pgfplotstabletypeset[
  col sep = comma,
  header = has colnames,
  columns = {Test, Alg. 2, Alg. 3, CR, S-CR, sqrtm},
  every head row/.style={after row=\hline},
  every first column/.style={column type/.add={}{|}},
  columns/Test/.style = {string type, column name = Test},
  columns/Alg. 2/.style   = {string type, column name = Alg. 1},
  columns/Alg. 3/.style  = {string type, column name = Alg. 2, string type},
  columns/CR/.style  = {string type, column name = CR, string type},
  columns/S-CR/.style  = {string type, column name = S-CR},
  columns/sqrtm/.style  = {string type, column name = sqrtm}       
]{mxerr-iter_100.dat}
\caption{Maximum component-wise relative errors generated by the different algorithms in our tests performed with $n=100$. Between parentheses, the number of iterations needed. In the last column, the relative error generated by the Matlab command {\tt sqrtm}.}
\label{tab:pwer}
\end{table}

A second remark concerns the algorithm S-CR; in fact, it is the one that takes the smallest number of iteration steps due to its quadratic convergence. However, since this algorithm is norm-wise stable but not component-wise stable, the maximum component-wise relative errors generated by S-CR take pretty large values even though smaller than the ones generated by the Matlab command {\tt sqrtm}. On the other hand, Algorithms \ref{alg:bin}, \ref{alg:U}, and CR produce relative errors close to the machine precision that differ little from each other. The number of iterations required by these algorithms, reported between parentheses, agrees with the convergence rates reported in Table \ref{tab:rho}.

\begin{figure}
\begin{center}	
\begin{tabular}{cc}		
\begin{tikzpicture} [trim axis left, trim axis right, scale=0.55]
    \begin{axis}[
    xlabel={step},
    ylabel={error},
    legend pos=north east,
    ymode = log,
    log base y=10,
    ymin = 2e-16, ymax = 1,
    xmin = 0, xmax = 90,
    title = Test 1,
    ]
        \addplot table[x=step, y=nat] {err_plot_test1.dat};
        \addplot table[x=step, y=ubs] {err_plot_test1.dat};
        \addplot table[x=step, y=cr] {err_plot_test1.dat};
        \addplot table[x=step, y=crs] {err_plot_test1.dat};
        \legend{Natural, U-based, CR, S-CR}
    \end{axis}
\end{tikzpicture}
&\qquad
\begin{tikzpicture} [trim axis left, trim axis right, scale=0.55 ]
\begin{axis}[
xlabel={step},
legend pos=north east,
ymode = log,
log base y=10,
ymin = 2e-16, ymax = 1,
xmin = 0, xmax = 120,
title = Test 2,
]
\addplot table[x=step, y=nat] {err_plot_test2.dat};
\addplot table[x=step, y=ubs] {err_plot_test2.dat};
\addplot table[x=step, y=cr] {err_plot_test2.dat};
\addplot table[x=step, y=crs] {err_plot_test2.dat};
\legend{Natural, U-based, CR, S-CR}
\end{axis}
\end{tikzpicture}\\
\begin{tikzpicture} [trim axis left, trim axis right, scale=0.55]
\begin{axis}[
xlabel={step},
ylabel={error},
legend pos=north east,
ymode = log,
log base y=10,
ymin = 2e-16, ymax = 1,
xmin = 0, xmax = 60,
title = Test 3,
]
\addplot table[x=step, y=nat] {err_plot_test3.dat};
\addplot table[x=step, y=ubs] {err_plot_test3.dat};
\addplot table[x=step, y=cr] {err_plot_test3.dat};
\addplot table[x=step, y=crs] {err_plot_test3.dat};
\legend{Natural, U-based, CR, S-CR}
\end{axis}
\end{tikzpicture}&\qquad 
\begin{tikzpicture} [trim axis left, trim axis right, scale=0.55]
\begin{axis}[
xlabel={step},
legend pos=north east,
ymode = log,
log base y=10,
ymin = 2e-16, ymax = 1,
xmin = 0, xmax = 120,
title = Test 4,
]
\addplot table[x=step, y=nat] {err_plot_test4.dat};
\addplot table[x=step, y=ubs] {err_plot_test4.dat};
\addplot table[x=step, y=cr] {err_plot_test4.dat};
\addplot table[x=step, y=crs] {err_plot_test4.dat};
\legend{Natural, U-based, CR, S-CR}
\end{axis}
\end{tikzpicture}
\end{tabular}\caption{Maximum component-wise relative error at each step of the different algorithms. From top left to bottom right, the case of Tests 1,2,3,4. Here, CR is cyclic reduction as given in \cite{uno} in the version relying on triplets, while S-CR is standard cyclic reduction with the shift acceleration \cite{blm:book}.}
    \label{fig:2-6-100}
\end{center}
\end{figure}

These features are confirmed in Figure \ref{fig:2-6-100}, where the dynamics of convergence and the approximation error obtained at each step by the different algorithms are reported on a semi-log scale.
Here, the algorithms are denoted as {\tt natural}, {\tt U-based}, {\tt CR}, {\tt S-CR}.
We may observe that while shifted CR has the steepest descent due to quadratic convergence, the approximation error cannot go below a large threshold due to the lack of component-wise stability. For all the other algorithms, the graphs show a linear descent and reach values close to machine precision. Except for Test 3, where U-based and natural algorithms have much faster convergence than CR, in the other tests, Algorithm \ref{alg:bin} is the slowest one, and Algorithm \ref{alg:U} has a convergence speed close to that of CR.

Concerning the CPU time, Figure \ref{fig:time} reports the values taken by Algorithm \ref{alg:bin}, 
Algorithm \ref{alg:U}, 
and CR, 
for the four different tests. 
The numerical values are the median computed from 50 runs of each algorithm using the same test matrix.

We may observe that, despite the number of iterations needed by Algorithms~\ref{alg:bin} and \ref{alg:U} being larger than the one needed by CR for Tests 1,2, and 4, the CPU time of Algorithms~\ref{alg:bin} and \ref{alg:U} is substantially inferior to the time needed by CR in the triplet-based version of~\cite{uno}. This difference is even more evident for Test~3, where the convergence of Algorithms~\ref{alg:bin} and \ref{alg:U} is faster.

The better efficiency of Algorithms \ref{alg:bin} and \ref{alg:U} is mainly because the cost per iteration is lower than the cost of the other algorithms. 
In fact, each step of Algorithm \ref{alg:bin} requires only one matrix multiplication and one matrix addition. Both computations are built-in operations in Matlab and rely on the BLAS package. Whereas the version of CR given in \cite{uno}, based on triplets, requires using the GTH trick for inverting an M-matrix that is not much more expensive than matrix multiplication, but is implemented as a separate Matlab function. Its execution is slower than matrix operations that rely on BLAS.

\begin{figure}
\begin{center}	
\begin{tabular}{cc}		
\begin{tikzpicture} [trim axis left, trim axis right, scale=0.55]
    \begin{axis}[
    xlabel={$n$},
    ylabel={CPU time (sec.)},
    legend pos=south east,
    ymode = log,
    log base y=10,
    ymin = 5e-4, ymax = 6,
    xmin = 0, xmax = 400,
    title = Test 1,
    ]
        \addplot table[x=n, y=nat] {cpu-test1.dat};
        \addplot table[x=n, y=ubs] {cpu-test1.dat};
        \addplot table[x=n, y=cr] {cpu-test1.dat};
        \legend{Natural, U-based, CR}
        
    \end{axis}
\end{tikzpicture}
&\qquad
\begin{tikzpicture} [trim axis left, trim axis right, scale=0.55]
    \begin{axis}[
    xlabel={$n$},
    legend pos=south east,
    ymode = log,
    log base y=10,
    ymin = 5e-4, ymax = 6,
    xmin = 0, xmax = 400,title = Test 2,
    ]
        \addplot table[x=n, y=nat] {cpu-test2.dat};
        \addplot table[x=n, y=ubs] {cpu-test2.dat};
        \addplot table[x=n, y=cr] {cpu-test2.dat};
        \legend{Natural, U-based, CR}
    \end{axis}
\end{tikzpicture}\\
\begin{tikzpicture} [trim axis left, trim axis right, scale=0.55]
    \begin{axis}[
    xlabel={$n$},
    ylabel={CPU time (sec.)},
    legend pos=south east,
    ymode = log,
    log base y=10,
    ymin = 5e-4, ymax = 6,
    xmin = 0, xmax = 400,title = Test 3,
    ]
        \addplot table[x=n, y=nat] {cpu-test3.dat};
        \addplot table[x=n, y=ubs] {cpu-test3.dat};
        \addplot table[x=n, y=cr] {cpu-test3.dat};
        \legend{Natural, U-based, CR}
    \end{axis}
\end{tikzpicture}&\qquad 
\begin{tikzpicture} [trim axis left, trim axis right, scale=0.55]
    \begin{axis}[
    xlabel={$n$},
    legend pos=south east,
    ymode = log,
    log base y=10,
    ymin = 5e-4, ymax = 6,
    xmin = 0, xmax = 400,title = Test 4,
    ]
        \addplot table[x=n, y=nat] {cpu-test4.dat};
        \addplot table[x=n, y=ubs] {cpu-test4.dat};
        \addplot table[x=n, y=cr] {cpu-test4.dat};
        \legend{Natural, U-based, CR}
    \end{axis}
\end{tikzpicture}
\end{tabular}\caption{CPU time needed by the different algorithms for different values of $n$. From top left to bottom right, the results of Tests 1, 2, 3, 4.}
\label{fig:time}
\end{center}
\end{figure}

It must be said that Algorithm \ref{alg:U} requires one matrix inversion per step, which is performed using the same function that implements the GTH trick. However, in this case, due to the self-correcting feature of Algorithm \ref{alg:U}, we alternated between steps that used inversion via the faster ``backslash'' command and steps that used the GTH trick. This strategy maintains the same convergence and preserves the component-wise accuracy of the computation, but with a lower overall CPU time.

\section{Conclusions}\label{sec:conclusions}
We have provided and analyzed two fixed-point iterations to compute the square root of a singular M-matrix $A$ in a component-wise, numerically stable way.
The convergence analysis of these algorithms has been performed, and it has been shown that with a suitable choice of the initial approximation, the linear convergence can be preserved and the convergence factor can be explicitly expressed in terms of the eigenvalues of the matrix $A$.

Numerical experiments demonstrate the effectiveness of these fixed-point iterations for certain classes of problems.

\end{document}